\newtheorem*{theorem*}{Theorem}
\newtheorem{theorem}{Theorem}[section]
\newtheorem{proposition}[theorem]{Proposition}
\newtheorem{corollary}[theorem]{Corollary}
\newtheorem{rem}[theorem]{Remark}
\newtheorem{claim}[theorem]{Claim}
\title{Every Poset has a Large Cut}
\author{Nati Linial\thanks{School of Computer Science and Engineering, Hebrew University, Jerusalem 91904, Israel. e-mail: nati@cs.huji.ac.il. Supported in part by ISF Grant 1150/23, "Geometric Aspects of Combinatorics".}
			\and{Ori Shoshani\thanks{School of Computer Science and Engineering, Hebrew University, Jerusalem 91904, Israel. e-mail: ori.shoshani.045@gmail.com.}}}
\date{}
\newcommandx{\Todo}[2][1=]{\todo[linecolor=blue,backgroundcolor=blue!25,bordercolor=blue,#1]{#2}}
\newcommandx{\Note}[2][1=]{\todo[linecolor=green,backgroundcolor=green!25,bordercolor=green,#1]{#2}}
\newcommandx{\revised}[2][1=]{\todo[linecolor=orange,backgroundcolor=orange!25,bordercolor=orange,#1]{#2}}
\begin{document}
\maketitle
\begin{abstract}
We prove that every finite poset has a directed cut with at least one half
of the poset's pairwise order relations. The bound is tight. Also,
the largest directed cut in a poset can be found in linear time.
\end{abstract}
\section{Statement}\label{sect:large cut}
A {\em cut} in a graph $(V,E)$ is a partition $(L,R)$ of the vertex set $V$.
Its {\em size} is the number of edges $xy\in E$ with $x\in L$
and $y\in R$. It is well known that every finite graph has a cut of 
size $\ge |E|/2$, and, moreover, that this bound is tight up to additive lower order terms.
Such large cuts can be found by using either a {\em random} partition of the vertices
or arise as a {\em locally optimal} partition. What is the situation in posets?

Let $(P,>)$ be a finite poset. 
For two subsets $X,Y$ of $P$ (not necessarily disjoint), we denote
$E(X,Y):=\{(x,y)|x\in X, y\in Y\text{~and~}x<y\}$, and $e(X,Y):=|E(X,Y)|$.
Where necessary, we specify the underlying poset and write, e.g., $e_P(X,Y)$.
A {\em cut} in $(P,>)$ is a partition $(B,U)$ of the set $P$.
Its {\em size} is defined as $e(B,U)$. Here is our main result:

\begin{theorem}\label{thm:main}
Every finite poset $(P,>)$ has a cut $(B,U)$ with
\begin{equation}\label{eq:main}
e\left(B, U\right) \ge \frac{e\left(P, P\right)}{2}.
\end{equation}
The bound is tight up to lower-order terms. Such a cut can be found in time $O(|(P,>)|)$.
\end{theorem}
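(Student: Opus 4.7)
For each $v \in P$ define the signed degree $h(v) := d^+(v) - d^-(v)$, where $d^+(v) := |\{y : y > v\}|$ and $d^-(v) := |\{x : x < v\}|$. An immediate observation is that if $x < y$ in $P$ then $\{z : z < x\} \cup \{x\} \subseteq \{z : z < y\}$ and dually $\{z : z > y\} \cup \{y\} \subseteq \{z : z > x\}$, so $h(x) > h(y)$. Hence ordering $P$ by strictly decreasing $h$ (with ties among incomparable elements broken arbitrarily) yields a linear extension $v_1, \ldots, v_n$. The cuts to aim at are the prefix cuts $B_k := \{v_1,\ldots,v_k\}$, $U_k := P \setminus B_k$. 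Since $B_k$ is a down-set, $e(U_k, B_k) = 0$, and a short telescoping yields
\[
e(B_k, U_k) \;=\; \sum_{v \in B_k} d^+(v) \;-\; e(B_k, B_k) \;=\; \sum_{i=1}^{k} h(v_i).
\]
With the positive $h$-values appearing first, this prefix sum is maximized at $k^\ast := |\{v : h(v) > 0\}|$, attaining the value $D^+ := \sum_{h(v) > 0} h(v) = \tfrac12 \sum_v |h(v)|$ (using $\sum_v h(v) = 0$).

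The heart of the argument is then the combinatorial bound $D^+ \geq e(P,P)/2$, equivalently $\sum_v |h(v)| \geq e(P,P)$. Write $P^+ := \{h > 0\}$, $P^- := \{h < 0\}$, $P^0 := \{h = 0\}$ and note the identity $e(P^+, P^0) = e(P^0, P^-)$ (which follows from $h(v) = 0$ forcing $d^+_{P^-}(v) = d^-_{P^+}(v)$ for each $v \in P^0$). Using this, the desired bound reduces to the purely combinatorial inequality
\[
e(P^+, P^-) \;\geq\; e(P^+, P^+) \,+\, e(P^-, P^-).
\]
My plan is to prove this by induction on $|P|$: apply the same statement to the strictly smaller sub-posets $P^+$ and $P^-$ to bound the ``internal'' counts $e(P^+,P^+)$ and $e(P^-,P^-)$ by $2D^+_{P^+}$ and $2D^+_{P^-}$ respectively, and then combine with the vertex-level bound $d^+_{P \setminus P^+}(v) \geq 1 - h_{P^+}(v)$ for $v \in P^+$ (derived from $h_P(v) \geq 1$ together with the splitting $h_P(v) = h_{P^+}(v) + d^+_{P^0}(v) + d^+_{P^-}(v)$), and its dual for $v \in P^-$, summed appropriately to lower-bound the cross-count $e(P^+, P^-)$ by $2 D^+_{P^+} + 2 D^+_{P^-}$. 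The main obstacle I foresee is precisely this last chaining step: tying together the two recursive bounds with the two pointwise sums to close the inequality, since a naive summation loses too much to the middle layer $P^0$.

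Tightness follows from the $n$-element chain, where $h(v_i) = n+1-2i$ gives $D^+ = \lfloor n/2 \rfloor \lceil n/2 \rceil = e(P,P)/2 + O(n)$, matched by the explicit cut from the construction. The $O(|(P,>)|)$-time algorithm is the same construction: a single pass over the list of comparable pairs computes all $d^+(v)$ and $d^-(v)$, and hence $h(v)$; a second pass checks the sign of $h$ and outputs $(P^+, P \setminus P^+)$ together with the cut size $D^+$.
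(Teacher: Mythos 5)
Your setup is correct and is genuinely different from the paper's argument, and the reduction you reach is clean: with $h(v)=d^+(v)-d^-(v)$, the monotonicity $x<y\Rightarrow h(x)>h(y)$, the prefix-cut identity $e(B_k,U_k)=\sum_{i\le k}h(v_i)$, the value $D^+=\tfrac12\sum_v|h(v)|$, and the identity $e(P^+,P^0)=e(P^0,P^-)$ are all right, and they correctly reduce the theorem to the inequality $e(P^+,P^-)\ge e(P^+,P^+)+e(P^-,P^-)$. (Your cut is in fact the same cut the paper uses: $P^+$ is its set of deficit elements and $P^-$ its set of surplus elements.) The tightness discussion and the linear-time claim are fine. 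But the heart of the theorem is exactly that last inequality, and you have not proved it; what you offer is a plan that, as stated, cannot close. Your induction would need the strictly stronger statement $e(P^+,P^-)\ge 2D^+_{P^+}+2D^+_{P^-}$, since by the inductive hypothesis $2D^+_{P^+}\ge e(P^+,P^+)$ often with large slack (for a star with $k$ leaves, $2D^+=2k$ versus $e=k$), so you are asking the cross-count to beat a quantity that can be about twice what the theorem actually requires. Worse, this strengthened inequality is \emph{exactly tight} in many small configurations (e.g.\ $m<a_1,\dots,a_k$, each $a_i$ below two balanced elements $z_i^1,z_i^2$, each $z_i^j$ below a two-chain $w_i^1<w_i^2$, gives $e(P^+,P^-)=4k=2D^+_{P^+}+2D^+_{P^-}$), so no lossy summation can establish it. And your pointwise bounds genuinely are lossy: summing $d^+_{P\setminus P^+}(v)\ge 1-h_{P^+}(v)$ over the $v\in P^+$ with $h_{P^+}(v)\le 0$ yields only $e(P^+,P^0)+e(P^+,P^-)\ge D^+_{P^+}+|\{v\in P^+: h_{P^+}(v)\le 0\}|$ --- a single factor of $D^+_{P^+}$, not two, and with the $e(P^+,P^0)$ term still sitting on the left, which is precisely the ``middle layer'' leak you yourself flag. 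So the proposal identifies the right target but leaves the essential step unproven.

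For comparison, the paper does not attempt to bound the three blocks against each other directly. It proves (by induction on $e(P,P)$) that the cut $(B,U)$ with $B\supseteq\Delta_P=P^+$ and $U\supseteq\Sigma_P=P^-$ satisfies $e(B,U)\ge e(P,P)/2$: it takes a longest chain, deletes one or two cover relations near the point where the chain crosses from deficit to surplus, checks how the deficit/surplus/balanced statuses of the two or three affected elements can change, and verifies that the cut size drops by exactly one while $e(P,P)$ drops by one or two. If you want to salvage your formulation, I would suggest proving $e(P^+,P^-)\ge e(P^+,P^+)+e(P^-,P^-)$ by a similar relation-deletion induction rather than by recursing into $P^+$ and $P^-$ separately; note that any proof must use transitivity in an essential way, since the $1/2$ bound fails for general acyclic digraphs.
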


Tightness is established by considering an $n$-element chain, where
$e(P,P)={n\choose 2}$, since 
\[\max e(B,U) = \lfloor\frac{n^2}{4}\rfloor=\frac{1+o_n(1)}{2}\cdot{n\choose 2},\]
where the $\max$ is over all cuts $(B,U)$ of $P$.

With an eye to the situation in graphs, random
partitions of posets yield the weaker estimate
of $1/4$. A simple local-optimality
consideration yields the better bound of 
$1/3$, but reaching the desired $1/2$
takes some work. The random construction yields $1/4$ for all digraphs,
and this is tight, as witnessed e.g.,
by random tournaments. More interestingly, as shown in \cite{alon2007maxdirectedcuts} the best
answer remains $1/4$ even for {\em acyclic} digraphs.

Also in terms of the computational complexity of this problem, posets are
better behaved than general acyclic digraphs. Indeed
as shown in \cite{lampis2011digraph}, it is NP-hard to
find a directed max-cut in acyclic directed graphs. In contrast,
the proofs below show how to find a directed max-cut of a poset $(P,>)$ 
in time linear in $|(P,>)|$.

\subsection{Preliminaries}

We start with some standard notation: An {\em ideal} in $P$
is an upward closed subset $I\subseteq P$, i.e., $x\in I$ and $y>x$ imply that $y\in I$.
Likewise, a downward closed subset $F\subseteq P$ is called a {\em filter}.
We need the following simple observation:
    
\begin{claim}\label{claim:ideal-filter}
If $(B,U)$ is a largest cut in a finite poset
$(P,>)$, then $B$ is a filter and $U$ is an ideal.
\begin{proof}
We argue by contradiction. If either $B$ is not a filter or $U$ is not an ideal,
then there are elements $x\in U, y\in B$
with $x<y$. Let us swap $x$ and $y$. Namely, let $V:=(B\setminus\{y\})\cup\{x\}$
and let $W:= (U\setminus\{x\})\cup\{y\}$. Then 
$e(V,W)=1+e(B,U)+e(B,\left\{y\right\})+e(\left\{x\right\},U)-e(B,\left\{x\right\})-e(\left\{y\right\},U)$, therefore
\[e(V,W)\ge1+e(B,U)\]
contrary to the maximality of $(B,U)$.
\end{proof}
    \end{claim}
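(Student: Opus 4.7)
My plan is a short argument by contradiction via a local swap. Suppose $(B,U)$ is a largest cut but the conclusion fails. Since $(B,U)$ is a partition of $P$, the failure of ``$B$ a filter, $U$ an ideal'' is equivalent to the existence of a pair $x\in U$, $y\in B$ with $x<y$: if $B$ is not a filter, some $y\in B$ has a smaller element $x\not\in B$, hence $x\in U$; the dual statement handles the case where $U$ is not an ideal. I would then exchange their sides: let $V:=(B\setminus\{y\})\cup\{x\}$ and $W:=(U\setminus\{x\})\cup\{y\}$, and aim to show $e(V,W)>e(B,U)$, contradicting maximality.

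The accounting is straightforward. Splitting $e(V,W)$ and $e(B,U)$ according to whether each endpoint lies in $B\setminus\{y\}$, $U\setminus\{x\}$, or is one of the swapped elements $\{x,y\}$, the bulk term $e(B\setminus\{y\},U\setminus\{x\})$ appears in both and cancels. What remains is
\[
e(V,W) - e(B,U) \;=\; 1 \;+\; e(B,\{y\}) + e(\{x\},U) \;-\; e(B,\{x\}) - e(\{y\},U),
\]
where the $+1$ records the pair $(x,y)$ itself (newly split across the cut in the $V\to W$ direction since $x<y$, but not contributing to the old cut since $y\not<x$), and the singleton terms record how $x$ and $y$ interact with the rest of $B$ and $U$. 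Irreflexivity of $<$ ensures that $x$ and $y$ themselves make no contribution to the singleton quantities, so one need not worry about double-counting the swapped elements.

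The only genuinely non-bookkeeping step is to see that the signed sum of four singleton terms is non-negative, and this is immediate from transitivity of $<$. Since $x<y$, any $b\in B$ with $b<x$ also satisfies $b<y$, giving $e(B,\{x\})\le e(B,\{y\})$; symmetrically, any $u\in U$ with $y<u$ satisfies $x<u$, giving $e(\{y\},U)\le e(\{x\},U)$. Combining, $e(V,W)\ge 1+e(B,U)$, contradicting the maximality of $(B,U)$. I do not anticipate any real obstacle; the argument is essentially a switching/local-move proof, and the only place to be deliberate is the case analysis confirming that the swapped pair $(x,y)$ contributes exactly $+1$ rather than $0$ or $-1$ to the change in cut size.
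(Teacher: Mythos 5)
Your proposal is correct and follows essentially the same route as the paper: the same swap $V:=(B\setminus\{y\})\cup\{x\}$, $W:=(U\setminus\{x\})\cup\{y\}$, the same bookkeeping identity for $e(V,W)-e(B,U)$, and the same conclusion contradicting maximality. If anything, you are slightly more explicit than the paper in justifying via transitivity that $e(B,\{x\})\le e(B,\{y\})$ and $e(\{y\},U)\le e(\{x\},U)$, which the paper leaves implicit.
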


The set $P$ naturally splits into three categories.
\begin{itemize}
    \item 
The set $\Delta=\Delta_P$ of {\em deficit} elements $v$ for which $e\left(\left\{v\right\}, P\right)>e\left(P,\left\{v\right\}\right)$.
\item 
The set $\Sigma=\Sigma_P$ of {\em surplus} elements $v$ for which $e\left(\left\{v\right\}, P\right)<e\left(P,\left\{v\right\}\right)$.
\item 
The set $\Lambda=\Lambda_P$ of {\em balanced} elements $v$ for which $e\left(\left\{v\right\}, P\right) = e\left(P,\left\{v\right\}\right)$.
\end{itemize}

Here are some simple observations concerning this partition of the elements:

\begin{proposition}\label{prop:continuity}
Let $a<b$ be two elements in $(P,>)$. 
\begin{itemize}
\item 
If $a$ is either a surplus or a balanced element, then $b$ is a surplus element.
\item 
If $b$ is either a deficit or a balanced element, then $a$ is a deficit element.
\end{itemize}
In particular, the set $\Lambda_P$ of balanced elements in $P$
forms an antichain.
\end{proposition}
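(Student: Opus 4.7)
The plan is to translate everything into counts of up-sets and down-sets of single elements, and then exploit transitivity.

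First I would set $U(v):=\{y\in P:v<y\}$ and $D(v):=\{x\in P:x<v\}$, so that $e(\{v\},P)=|U(v)|$ and $e(P,\{v\})=|D(v)|$. With this notation the three categories become $\Delta_P=\{v:|U(v)|>|D(v)|\}$, $\Sigma_P=\{v:|U(v)|<|D(v)|\}$, and $\Lambda_P=\{v:|U(v)|=|D(v)|\}$.

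The key observation, and the only place transitivity enters, is this: if $a<b$ then every $y>b$ satisfies $y>a$, and every $x<a$ satisfies $x<b$. Hence $U(b)\subseteq U(a)$ and $D(a)\subseteq D(b)$, and moreover these containments are strict because $b\in U(a)\setminus U(b)$ and $a\in D(b)\setminus D(a)$. Consequently
\[|U(a)|\ge |U(b)|+1\qquad\text{and}\qquad |D(b)|\ge |D(a)|+1.\]

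From here both bullets are one-line chains of inequalities. For the first bullet, if $a\in\Sigma_P\cup\Lambda_P$ then $|U(a)|\le|D(a)|$, so
\[|U(b)|\le|U(a)|-1\le|D(a)|-1\le|D(b)|-2<|D(b)|,\]
placing $b$ in $\Sigma_P$. The second bullet is symmetric: if $b\in\Delta_P\cup\Lambda_P$ then $|D(b)|\le|U(b)|$, and the same chain (reversed) gives $|D(a)|<|U(a)|$, so $a\in\Delta_P$. The antichain corollary is then immediate: if $a<b$ with both in $\Lambda_P$, the first bullet forces $b\in\Sigma_P$, a contradiction.

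There is no real obstacle here — the whole content is the strictness of the two containments $U(b)\subsetneq U(a)$ and $D(a)\subsetneq D(b)$, which buys a $+2$ gap that absorbs the non-strict hypothesis "surplus or balanced" (resp.\ "deficit or balanced") and still yields a strict conclusion.
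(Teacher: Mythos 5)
Your proof is correct and is essentially the paper's own argument: the paper's two displayed chains of inequalities are exactly your strict containments $U(b)\subsetneq U(a)$ and $D(a)\subsetneq D(b)$ combined with the surplus/balanced (resp.\ deficit/balanced) hypothesis, just written without naming the up-sets and down-sets explicitly. Nothing to add.
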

\begin{proof}
This follows from the obvious relations:
\[e\left(P, \left\{b\right\}\right)>e\left(P, \left\{a\right\}\right)\ge e\left(\left\{a\right\}, P\right) > e\left(\left\{b\right\}, P\right)\]
and
\[e\left(\left\{a\right\}, P\right)> e\left(\left\{b\right\}, P\right) \ge e\left(P, \left\{b\right\}\right) > e\left(P, \left\{a\right\}\right).\]
\end{proof}

\begin{corollary}\label{cor:equiv}
If $(B,U)$ is a cut in $P$, where
$B \supseteq \Delta_P$, and $U\supseteq \Sigma_P$, then 
$e(B,U)$ does not depend on the location of the balanced elements in $P$.
\end{corollary}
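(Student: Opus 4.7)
The plan is to reduce the corollary to the local claim that moving a single balanced element $v$ from one side of the cut to the other preserves $e(B,U)$. Once this is established, the corollary follows by induction, since any two cuts satisfying the hypothesis differ only in where the balanced elements are placed, and such configurations can be connected by a sequence of single-element swaps.

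For the local step, I would fix $v\in\Lambda_P$ with $v\in B$ and compute $e(B',U')$ for $B' := B\setminus\{v\}$ and $U' := U\cup\{v\}$. Splitting the pairs counted by $e(B',U')$ according to whether the right endpoint equals $v$ gives the direct identity
\[e(B',U') - e(B,U) \;=\; e(B,\{v\}) - e(\{v\},U).\]

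The key step is to show the right-hand side is zero, and here I would invoke Proposition~\ref{prop:continuity}: since $v$ is balanced, every element strictly above $v$ is surplus and hence already lies in $U$, and every element strictly below $v$ is deficit and hence already lies in $B$. Therefore $e(\{v\},U) = e(\{v\},P)$ and $e(B,\{v\}) = e(P,\{v\})$, and these two quantities are equal by the defining property of a balanced element. Hence the swap preserves the cut size. Moreover the intermediate cut still satisfies the hypothesis (since $v$ is balanced, it is neither a deficit nor a surplus element), so we may iterate the argument over all balanced elements whose locations differ between two given configurations.

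I do not foresee any substantive obstacle; the calculation is elementary, and the only conceptual point is the recognition that Proposition~\ref{prop:continuity} forces every element comparable to a balanced $v$ to be on its ``correct'' side of the cut regardless of where $v$ itself is placed — so the contribution of $v$ to $e(B,U)$ is the same whether we count edges out of $v$ or into $v$.
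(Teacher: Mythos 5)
Your proposal is correct and follows essentially the same route as the paper: both arguments rest on Proposition~\ref{prop:continuity} forcing every element below a balanced $v$ to be deficit (hence in $B$) and every element above it to be surplus (hence in $U$), so that $e(B,\{v\})=e(P,\{v\})=e(\{v\},P)=e(\{v\},U)$ and moving $v$ leaves the cut size unchanged. Your version merely makes the single-swap identity and the iteration over all balanced elements explicit, which the paper leaves implicit.
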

\begin{proof}
By proposition \ref{prop:continuity}, for every $v\in \Lambda$
there holds $e(\Delta_P, v) = e(v, \Sigma_P)$, so that $v$'s
contribution to $e(B,U)$ is the same whether it belongs to $B$ or to $U$.
\end{proof}

\begin{proposition}\label{prop:maximality happens}
If $(B,U)$ is a largest cut in a finite poset $(P,>)$, then
\begin{enumerate}
    \item 
    $\Delta_P \subseteq B$
    \item 
    $\Sigma_P \subseteq U$
\end{enumerate}
In words, every deficit element of $P$ must be in B, and every surplus element in $U$,
while the size of the cut does not depend on where the balanced elements are.
\begin{proof}
By claim \ref{claim:ideal-filter}, $B$ is a filter and $U$ is an ideal.
Let $u\in U$ be a minimal element in $U$, and let us
move $u$ from $U$ to $B$. The size of the cut changes (additively) by 
$e\left(\left\{u\right\}, U\right)-e\left(B,\left\{u\right\}\right)$. 
Since $B$ is a filter, $e\left(\left\{u\right\}, U\right) = e\left(\left\{u\right\}, P\right)$, 
and since $u$ is minimal in $U$, there holds 
$e\left(B,\left\{u\right\}\right) = e\left(P,\left\{u\right\}\right)$. Consequently,
the change in the cut's size is
\[e\left(\left\{u\right\}, U\right)-e\left(B,\left\{u\right\}\right)= e\left(\left\{u\right\}, P\right)-e\left(P,\left\{u\right\}\right)\]
The assumed maximality of $(B,U)$'s, and 
proposition \ref{prop:continuity}, imply the first item.

The same argument, moving a maximal element of $B$ to $U$ completes the proof.
            \end{proof}
        \end{proposition}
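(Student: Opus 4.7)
My plan is to lean on Claim \ref{claim:ideal-filter}, which tells me that in any largest cut $(B,U)$ the set $B$ is downward closed (a filter in this paper's convention) and $U$ is upward closed (an ideal). This structural rigidity converts the global optimality hypothesis into useful local information about extremal elements of $B$ and $U$, and the two halves of the proposition are symmetric, so I will only sketch the argument for $\Delta_P \subseteq B$.

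The first step is to look at a minimal element $u$ of $U$ and compute the effect of moving it into $B$. Because $U$ is upward closed, every $u' > u$ already lies in $U$, so $e(\{u\}, U) = e(\{u\}, P)$; because $u$ is minimal in $U$, every element below $u$ lies in $B$, so $e(B, \{u\}) = e(P, \{u\})$. The signed change in cut size is therefore
\[
e(\{u\}, U) - e(B, \{u\}) \;=\; e(\{u\}, P) - e(P, \{u\}),
\]
and maximality of $(B,U)$ forces this to be $\le 0$. In other words, every minimal element of $U$ lies in $\Sigma_P \cup \Lambda_P$.

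The second step promotes this from minimal elements of $U$ to all of $U$ via Proposition \ref{prop:continuity}. Suppose for contradiction that some $v \in U$ is a deficit element, and choose a minimal element $u$ of $U$ with $u \le v$. The previous step says $u \in \Sigma_P \cup \Lambda_P$. If $u = v$ we already have a contradiction; otherwise $u < v$, and Proposition \ref{prop:continuity} forces $v \in \Sigma_P$, again contradicting $v \in \Delta_P$. Hence $\Delta_P \cap U = \emptyset$, i.e.\ $\Delta_P \subseteq B$. The analogous argument, moving a maximal element of $B$ into $U$, yields $\Sigma_P \subseteq U$; the concluding remark about balanced elements follows directly from Corollary \ref{cor:equiv}. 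I do not anticipate any serious obstacle: the only subtlety is that a single swap only constrains extremal elements, and propagating the constraint along comparabilities via Proposition \ref{prop:continuity} is precisely what closes that gap.
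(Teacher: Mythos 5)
Your proof is correct and follows essentially the same route as the paper: swap a minimal element of $U$ (resp.\ a maximal element of $B$) using the filter/ideal structure from Claim \ref{claim:ideal-filter} to show extremal elements are not deficit (resp.\ not surplus), then propagate to all of $U$ (resp.\ $B$) via Proposition \ref{prop:continuity}. The only difference is that you make the propagation step explicit, which the paper leaves implicit in its appeal to Proposition \ref{prop:continuity}.
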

\begin{rem}
This proposition yields the claim about the computational
complexity made in theorem \ref{thm:main}.
\end{rem}

\section{Proof of the main result}

The following clearly implies theorem \ref{thm:main}.

\begin{theorem}\label{thm:cheaper}
Let $(P,>)$ be a finite poset. Let $(B,U)$
be a partition of $P$, where
$B \supseteq \Delta_P$
contains all deficit elements of $P$, and 
$U\supseteq \Sigma_P$ contains all
its surplus elements. The balanced
elements are arbitrarily partitioned
between $B$ and $U$. Then
\[e\left(B, U\right) \ge \frac{e\left(P, P\right)}{2}.\]
\end{theorem}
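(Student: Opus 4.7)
My plan is to rewrite the theorem as the equivalent inequality $\sum_{v\in P}|c(v)|\ge e(P,P)$, where $c(v):=e(\{v\},P)-e(P,\{v\})$, and then prove it by induction on $|P|$. To justify the reformulation I invoke Corollary~\ref{cor:equiv} to place all balanced elements in $U$, giving $e(B,U)=e(\Delta,\Lambda)+e(\Delta,\Sigma)$. A direct calculation yields $\sum_{v\in\Delta}c(v)=e(\Delta,\Lambda)+e(\Delta,\Sigma)$ and $-\sum_{v\in\Sigma}c(v)=e(\Delta,\Sigma)+e(\Lambda,\Sigma)$; combined with $e(\Delta,\Lambda)=e(\Lambda,\Sigma)$, which follows from $d^+(\ell)=d^-(\ell)$ and Proposition~\ref{prop:continuity} (everything below $\ell\in\Lambda$ is in $\Delta$, everything above is in $\Sigma$), this gives $\sum_v|c(v)|=2e(B,U)$, as needed.

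The inductive step splits on whether $\Lambda_P$ is empty. If there exists $\ell\in\Lambda_P$, I would delete it. By Proposition~\ref{prop:continuity}, all $k:=d^-(\ell)=d^+(\ell)$ elements below $\ell$ lie in $\Delta$ (with $c\ge 1$) and all those above lie in $\Sigma$ (with $c\le -1$). Deletion shifts $c$ down by one at each lower neighbor and up by one at each upper neighbor; in either case the sign of $c$ at that neighbor makes $|c|$ drop by exactly one, while $|c(\ell)|=0$. Hence $\sum_v|c(v)|$ drops by $2k$, matching the drop $2k$ in $e(P,P)$, so the slack $\sum_v|c(v)|-e(P,P)$ is preserved and the induction hypothesis applied to $P\setminus\{\ell\}$ finishes the case.

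If instead $\Lambda_P=\emptyset$ (and $P\ne\emptyset$, the base case being trivial), then $\sum_v c(v)=0$ together with the absence of zero $c$-values forces both $\Delta$ and $\Sigma$ to be nonempty. I would pick $v$ maximal within the subposet $\Delta_P$, so that $e(v,\Delta)=0$; combined with $\Lambda=\emptyset$, every neighbor above $v$ lies in $\Sigma$ and every neighbor below $v$ lies in $\Delta$. Writing $b:=e(v,\Sigma)$ and $c_0:=e(\Delta,v)$, we have $c(v)=b-c_0\ge 1$. Deletion drops $|c|$ by one at each of the $b+c_0$ neighbors of $v$ and removes the contribution $|c(v)|=b-c_0$, so $\sum_v|c(v)|$ drops by $2b$, while $e(P,P)$ drops only by $b+c_0$. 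The slack of $P$ therefore exceeds the slack of $P\setminus\{v\}$ by exactly $c(v)\ge 1$, and induction closes the case.

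The main obstacle I expect is the choice of vertex to delete: removing a generic minimal element of $P$ can lower the slack by an uncontrolled amount, so a naive induction fails. Proposition~\ref{prop:continuity} plays the decisive role here, forcing the neighborhood of $\ell\in\Lambda$, or of $v$ maximal within $\Delta$ when $\Lambda=\emptyset$, to split cleanly as ``$\Delta$ below'' and ``$\Sigma$ above''. That clean split is precisely what synchronizes the per-neighbor change in $\sum_v|c(v)|$ with the per-neighbor change in $e(P,P)$, and thereby keeps the slack monotone along the induction.
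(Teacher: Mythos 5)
Your proposal is correct, and it takes a genuinely different route from the paper. The paper inducts on $e(P,P)$ by deleting one or two \emph{cover relations} chosen along a longest chain, then tracks through a case analysis (three subcases in Case I, four in Case II) how the endpoints of the deleted covers migrate between $\Delta$, $\Sigma$ and $\Lambda$, adjusting the inherited cut accordingly. You instead reformulate the inequality as $\sum_{v\in P}\lvert c(v)\rvert\ge e(P,P)$ with $c(v)=e(\{v\},P)-e(P,\{v\})$ --- your identity $\sum_v\lvert c(v)\rvert=2e(B,U)$ checks out, using $e(\Sigma,\Delta)=e(\Lambda,\Delta)=e(\Sigma,\Lambda)=e(\Lambda,\Lambda)=0$ and $e(\Delta,\Lambda)=e(\Lambda,\Sigma)$, all consequences of Proposition~\ref{prop:continuity} --- and then induct on $\lvert P\rvert$ by deleting a \emph{vertex}: a balanced element if one exists (slack preserved, since both $\sum\lvert c\rvert$ and $e(P,P)$ drop by $2k$), and otherwise a maximal element $v$ of $\Delta$ (slack increases by exactly $c(v)\ge 1$). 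The arithmetic in both deletion steps is right, and the crucial point --- that the comparabilities at the deleted vertex split cleanly into ``$\Delta$ below, $\Sigma$ above,'' so each incident relation lowers $\lvert c\rvert$ at its other endpoint by exactly one --- is exactly where Proposition~\ref{prop:continuity} (plus maximality in $\Delta$ and $\Lambda=\emptyset$ in the second case) is needed and is correctly invoked. Your argument is arguably cleaner: it avoids the paper's subcase bookkeeping over how the endpoints of removed covers change category, and the slack formulation makes the induction hypothesis symmetric and quantitative (one can read off how much the cut exceeds $e(P,P)/2$). What the paper's relation-deletion approach buys in exchange is that it explicitly carries a concrete cut through the induction, which dovetails with Claim~\ref{claim:ideal-filter} and Proposition~\ref{prop:maximality happens} and the linear-time algorithmic claim; your version establishes the same bound for the same canonical cut via Corollary~\ref{cor:equiv}, so nothing is lost.
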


\begin{proof} (Theorem \ref{thm:cheaper}) 
We prove the inequality for one such cut, without specifying
where the balanced elements are located.
By corollary \ref{cor:equiv} this implies the inequality for all such cuts.\\
The proof is by induction on $e(P,P)$. When $P$ is an antichain and
$e(P,P)=0$, the claim holds vacuously.

We start with several easy observations:

As usual, we say that $x<y$ is a {\em cover} relation in $(P,>)$ if
there is no $z\in P$ with $x<z<y$. This is denoted $x\prec y$.
We need another easy observation:

\begin{proposition}\label{prop:eliminate}
Let $x<y$ be two elements of a poset $(P,>)$. Consider the binary relation
that is obtained by eliminating the single relation $x<y$ from $(P,>)$.
The resulting binary relation is a partial 
order iff $x\prec y$.
\end{proposition}

Let $C=(u_1 > u_2 >\ldots > u_t)$ be a longest chain in $(P,>)$.
The following facts are easy to verify, using proposition \ref{prop:continuity}:
\begin{itemize}
\item 
For all $j<t$ there holds $u_{j+1}\prec u_j$.
\item 
$u_1\in \Sigma_P$ is a surplus element.
\item 
$u_t\in\Delta_P$ is a deficit element.
\item 
Either of the following two cases occurs:
\begin{enumerate}
\item
{\bf Case I:}
$u_{\beta}$ is balanced for some index $t>\beta>1$, while 
$u_{\alpha}$ is surplus, resp.\ deficit whenever $\alpha < \beta$ resp.\ $\alpha > \beta$.
\item
{\bf Case II:}
For some index $\beta>1$ it holds that
$u_{\alpha}$ is surplus, resp.\ deficit whenever $\alpha < \beta$ resp.\ $\alpha \geq \beta$.
\end{enumerate}
\end{itemize}

In both cases we create a poset $Q$ by removing one or two 
cover relations from $(P,>)$. The inductive hypothesis applies to $Q$,
since $e(Q,Q)<e(P,P)$. An application of the induction hypothesis yields
a large cut $(V,W)$ in $Q$, a slight modification of which yields the
cut $(B,U)$ alluded to in the theorem.

Notation for {\bf Case I}: $b=u_{\beta+1}\prec a=u_{\beta}\prec w=u_{\beta-1}$,
where $b, a$ and $w$ are deficit, balanced, and surplus in this order. The poset $Q$
is obtained by removing the cover relations $b\prec a$ and $a\prec w$ from $P$.
Notice that $a$ is balanced in $Q$ as well. For $b$ and $w$ the possibilities are as follows:
\begin{enumerate}
\item 
$b$ remains deficit and $w$ remains surplus.
\item 
$b$ remains deficit and $w$ becomes balanced.
\item 
$w$ remains surplus and $b$ becomes balanced.
\end{enumerate} 

By induction we find a cut $(V,W)$ in $Q$, as in theorem \ref{thm:main}. Namely,
\[ e_Q\left(V, W\right)\ge \frac{e_Q\left(Q, Q\right)}{2}, \]

where $V\supseteq \Delta_Q$ contains all deficit elements, and $W\supseteq\Sigma_Q$
contains all surplus elements.
\begin{itemize}
\item 
Case $I_1$: $\Delta_P=\Delta_Q$ and $\Sigma_P=\Sigma_Q$. Let $(B,U):=(V,W)$
be our cut in $P$.
\item 
Case $I_2$: $\Delta_P=\Delta_Q$, $\Sigma_P=\Sigma_Q\cup\{w\}$. Let 
$(B,U):=(V\setminus\{w\},W\cup\{w\})$ be our cut in $P$.
\item 
Case $I_3$: $\Delta_P=\Delta_Q\cup\{b\}$, $\Sigma_P=\Sigma_Q$. Let 
$(B,U):=(V\cup\{b\},W\setminus\{b\})$ be our cut in $P$.
\end{itemize}

In all cases the parts of the $P$-cut contain $\Delta_P$ resp.\ $\Sigma_P$,
as required. To verify inequality (\ref{eq:main}),
note that $e_P(P,P)=e_Q(Q,Q)+2$ due to the two removed cover relations.
Also, $e_P(B,U) = e_Q(V,W)+1$ since exactly one of the relations removed is 
from $B$ to $U$.

\vspace{0.2in}

And for {\bf Case II} we denote: $b=u_{\beta}\prec w=u_{\beta-1}$ for $b$, resp.\ $w$
a deficit, resp.\ surplus element.
The poset $Q$
is obtained by removing the cover relation $b\prec w$ from $P$.
For $b$ and $w$ the possibilities are as follows:
\begin{enumerate}
\item 
$b$ remains deficit and $w$ remains surplus.
\item 
$b$ remains deficit and $w$ becomes balanced.
\item 
$w$ remains surplus and $b$ becomes balanced.
\item 
$w$ and $b$ both become balanced.
\end{enumerate} 

Here $e_P(P,P)=e_Q(Q,Q)+1$ due to the removed cover relation, so we may
apply induction and find a cut $(V,W)$ in $Q$, as in theorem \ref{thm:main}. Namely,
\[ e_Q\left(V, W\right)\ge \frac{e_Q\left(Q, Q\right)}{2}, \]
where $V\supseteq \Delta_Q$ contains all deficit elements, and $W\supseteq\Sigma_Q$
contains all surplus elements.\\
We want to modify the $(V,W)$ cut in $Q$ to a cut $(B,U)$ in $P$, so that
$B\supseteq \Delta_P$, $U\supseteq\Sigma_P$, and $e_P(B,U) = e_Q(V,W)+1$.
This can be accomplished as follows:
\begin{itemize}
\item 
Case $II_1$: $B:= V$, $U:=W$.
\item 
Case $II_2$: $B:= V\setminus\{w\}$, $U:=W\cup\{w\}$.
\item 
Case $II_3$: $B:= V\cup\{b\}$, $U:=W\setminus\{b\}$.
\item 
Case $II_4$: $B:= (V\setminus\{w\})\cup\{b\}$, $U:=(W\setminus\{b\})\cup\{w\}$.
\end{itemize}
\end{proof}

\printbibliography
\end{document}